\setlist{nolistsep}        
\newtheorem{theorem}{Theorem}
\newtheorem*{maintheorem}{Main Theorem}
\newtheorem{lemma}[theorem]{Lemma}
\newtheorem{corollary}[theorem]{Corollary}
\newcommand{\iin}{\!\in\!}
\newcommand{\norm}[1]{\lVert#1\rVert}
\newcommand{\posint}{\omega^\circ}
\newcommand{\Call}{\mathsf{C}}
\newcommand{\Ub}{\mathsf{U_b}}
\newcommand{\UMeas}{\mathfrak{M}_\mathsf{u}}
\newcommand{\CUMeas}{\mathfrak{M}_{\mathsf{u}\sigma}}
\newcommand{\cspace}{\Call([0,\omega_1])}
\newcommand{\sspace}{\mathsf{M}}
\newcommand{\precom}{\mathsf{p}}
\newcommand{\G}{\mathcal{G}}
\newcommand{\N}{\mathcal{N}}
\newcommand{\F}{\mathcal{F}}
\newcommand{\Pcov}{\mathcal{P}}
\newcommand{\U}{\mathcal{U}}
\newcommand{\W}{\mathcal{W}}
\newcommand{\ball}{\circledcirc}
\newcommand{\modik}[1]{\mathord{\stackrel{#1}{\vee}}}
\newcommand{\vbeta}{B}
\newcommand{\vbetaj}{B_j}
\newcommand{\vbetap}{B^\prime}
\newcommand{\charf}[1]{\mathrm{I}_{#1}}
\title{Cauchy filters from Pelant's games}
\author{Jan Pachl  \\
        Fields Institute   \\
        Toronto, Ontario, Canada}
\date{October 7, 2013}
\begin{document}
\maketitle

\begin{abstract}
The language of finite games is used to rephrase Pelant's proof
of his result:
The separable modification of the complete metric space
$\cspace$ is not complete.
\end{abstract}

\section{Introduction}

The uniform space concepts not defined here may be found in
Isbell~\cite{Isbell1964us}.

For every uniform space $X$ there is another uniform space on the same
set of points and compatible with the same topology,
called here the \emph{separable modification of $X$} and
denoted by $\precom_1 X$.
The uniformity of $\precom_1 X$ is projectively generated
by all uniformly continuous mappings from $X$ to separable metric spaces.
Isbell's notation for $\precom_1 X$ is $X_{\aleph_1}$
and also $eX$
(pages 52 and 129 in~\cite{Isbell1964us}, respectively).

What follows is a result of my attempt to understand
Pelant's proof~\cite{Pelant1975rnp}\cite{Pelant1976pcu}
of his theorem:
\begin{center}
\textbf{There is a (not too large) complete metric space $X$ for which
$\precom_1 X$ is not complete.}
\end{center}
The ``not too large'' qualification excludes examples such as the discrete
space of measurable cardinality.
In fact, in the spirit of~\cite[Thm 1.1]{Pelant2006csw},
Pelant's method lets us take $X=\cspace$,
the Banach space of continuous real-valued functions on the compact space
$[0,\omega_1]$ with the sup norm $\norm{\cdot}$.
In this paper I describe Pelant's construction
using level sets of functions in $\cspace$ and finite games
instead of Pelant's cornets and finite sequences
of alternating quantifiers.

By virtue of Exercise 2(b) on page 52 in~\cite{Isbell1964us},
every countable uniform cover of $X$ is a uniform cover of $\precom_1 X$.
Hence the set of all countable uniform covers of $X$
is a basis of uniform covers for $\precom_1 X$.
However, for the construction in the next section the property
of being a \emph{point-finite} uniform cover turns out to be
more useful than being countable.
The incompleteness of $\precom_1 \cspace$
then follows by using a basis of point-finite covers.
This is explained in section~\ref{sec:corollaries},
along with several other consequences of the main theorem.

\mbox{}\\
\textbf{Acknowledgments}\\
This paper was written while I was a visitor at the Fields Institute
in Toronto.
I wish to thank Juris Stepr\={a}ns for his comments that helped me
improve and simplify the presentation.

\section{The construction}
    \label{sec:main}

As in the proof of Theorem~1.1 in~\cite{Pelant2006csw}, let
\[
\sspace:=\{f\iin\cspace \mid f \text{ is monotone non-increasing, }
f(0)=1 \text{ and } f(\omega_1)=0 \}.
\]
With the subspace metric defined by the sup norm $\norm{\cdot}$ on $\cspace$,
$\sspace$ is a complete metric space.

\begin{maintheorem}
Let $\U$ be a uniform structure on $\sspace$ such that
\begin{itemize}
\item
the $\norm{\cdot}$ topology on $\sspace$ is compatible with $\U$;
\item
the $\norm{\cdot}$ uniformity on $\sspace$ is finer than $\U$; and
\item
$\U$ has a uniformity basis consisting of point-finite covers.
\end{itemize}
Then $\U$ is not complete.
\end{maintheorem}

The proof of the main theorem in this section is based
on the proof of Th.~17 in~\cite{Pelant1976pcu} (pp. 58--60).
It also incorporates elements of the proof of Th.~1.1
in~\cite{Pelant2006csw}.
It should be noted that \cite[Th.17]{Pelant1976pcu} deals
with more general point characters of uniformities;
the point-finite version that I prove here is a special case.

Write $\posint:=\{1,2,\dotsc\}$.
For every ordinal $\beta<\omega_1$ let
$\charf{\beta}\colon [0,\omega_1]\to \{0,1\}$ be
the characteristic function of the closed interval $[0,\beta]$.
When $n\iin\posint$
and $\vbeta=\langle \beta_0,\beta_1,\dotsc,\beta_{n-1}\rangle$
is a finite sequence of ordinals $<\omega_1$,
let
\[
\charf{\vbeta} := \max_{0\leq k < n} \frac{(n-k)\,\charf{\beta_k}}{n}
\]
and note that $\charf{B}\iin\sspace$.
When $g\iin\sspace$ and $\varepsilon>0$, write
\[
\ball[g,\varepsilon]:= \{f\iin\sspace \mid \;\norm{f-g}\leq\varepsilon \}.
\]
Covers of the form
$\{ \ball[g,\varepsilon] \mid g\iin\sspace \}$, $\varepsilon>0$,
form a uniformity basis of the metric space $\sspace$.
Hence any uniformity $\U$ satisfying the assumptions of the main theorem
has a uniformity basis consisting of point-finite covers each of which
is refined by the cover
$\{ \ball[f,\varepsilon] \mid f\iin\sspace \}$ for some $\varepsilon>0$.

For $U\subseteq\sspace$, $f\iin\sspace$ and $n\iin\posint$,
the finite game $\G(U,f,n)$ is played by two players Alice and Bob
as follows:
Alice moves first, and then the players alternate in choosing countable
ordinals;
each choice must be larger than or equal to the ordinals already
chosen in previous moves.
The game ends when the players have made $n$ moves each.

Thus each run of the game is a sequence
$
\langle \alpha_0 , \beta_0 , \alpha_1 , \beta_1 , \dotsc ,
\beta_{n\!-2}, \alpha_{n\!-1} , \beta_{n\!-1} \rangle
$
of ordinals such that
$
\alpha_0 \leq \beta_0 \leq \alpha_1 \leq \beta_1 \leq \dotsc \leq
\beta_{n\!-2}\leq \alpha_{n\!-1} \leq \beta_{n\!-1} < \omega_1
$;
say that \emph{Bob wins this run}
iff $\ball[f\vee\charf{\vbeta},1/n]\subseteq U$,
where $\vbeta$ is the sequence
$\langle \beta_0,\beta_1,\dotsc,\beta_{n\!-1}\rangle$ of Bob's moves.

A \emph{game position} is a prefix of a run of the game.
A \emph{strategy} for Bob is a mapping that takes any $U$, $f$, $n$ and
a game position ending with Alice's move as inputs
and produces a countable ordinal as output, to be used as Bob's next move.
Say that \emph{Bob wins the game} $\G(U,f,n)$ if he has a winning strategy.

We also need the modified game $\G(U,f,n; k)$ for every $1\leq k\leq n$;
it has the same rules as $\G(U,f,n)$ and in addition the first $n-k$ moves
by each player must be zero;
that is, $\alpha_i=\beta_i=0$ for $0\leq i\leq n-k-1$.
Thus $\G(U,f,n; n)$ is $\G(U,f,n)$.

Another modified game is $\G(U,f,n; k , \alpha)$ for $1\leq k\leq n$ and
$\alpha<\omega_1$.
It has the same rules as $\G(U,f,n; k)$ and in addition Alice
must play $\alpha_{n-k}=\alpha$.

For $f\iin\sspace$, $n\iin\posint$, $1\leq k\leq n$ and $\alpha<\omega_1$,
write
\begin{align*}
\W(f,n) & := \{ U \subseteq \sspace \mid \text{ Bob wins } \G(U,f,n) \}  \\
\W(f,n; k) & := \{ U \subseteq \sspace
                   \mid \text{ Bob wins } \G(U,f,n; k) \} \\
\W(f,n; k , \alpha ) & := \{ U \subseteq \sspace
                   \mid \text{ Bob wins } \G(U,f,n; k , \alpha) \}
\end{align*}

In the proof of the main theorem, the following lemma is used
to show that a certain $\U$-Cauchy filter of subsets of $\sspace$
does not converge.

\begin{lemma}
    \label{lem:largediam}
If $U\iin\W(f,n)$ for some $f\iin\sspace$ and some $n\iin\posint$
then the $\norm{\cdot}$ diameter of $U$ is $1$.
\end{lemma}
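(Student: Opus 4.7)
The aim is to exhibit two elements of $U$ at sup-norm distance $\geq 1$; together with the trivial upper bound (all members of $\sspace$ take values in $[0,1]$), this forces $\mathrm{diam}(U)=1$. Both elements will arise as centres $f\vee\charf{\vbeta}$ of the balls witnessing Bob's wins in two different plays of $\G(U,f,n)$ against a fixed winning strategy for Bob.

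The preparatory step is to invoke the classical fact that every continuous real-valued function on the compact ordinal $[0,\omega_1]$ is eventually constant; applied to $f$ this supplies an $\alpha^\ast<\omega_1$ with $f(\alpha)=0$ for all $\alpha\geq\alpha^\ast$. Now fix a winning strategy for Bob. In the first play, I would have Alice move as low as the rules permit ($\alpha_0=0$, and $\alpha_i=\beta_{i-1}$ thereafter); writing $\vbeta^{(1)}$ for Bob's resulting sequence, set $\gamma:=\beta^{(1)}_{n-1}<\omega_1$. In the second play, let $x_0:=\max(\gamma+1,\alpha^\ast)$ and have Alice open with $\alpha_0=x_0$ (subsequent Alice moves being arbitrary, say the minimum allowed), producing Bob's sequence $\vbeta^{(2)}$ with $\beta^{(2)}_0\geq x_0$.

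The verification is then immediate: both centres $h_i:=f\vee\charf{\vbeta^{(i)}}$ lie in $U$, and at $x_0$ one has $h_1(x_0)=0$ because $x_0>\gamma\geq\beta^{(1)}_k$ for every $k$ forces $\charf{\vbeta^{(1)}}(x_0)=0$ while $x_0\geq\alpha^\ast$ forces $f(x_0)=0$, whereas $h_2(x_0)=1$ because the $k=0$ term in the definition of $\charf{\vbeta^{(2)}}$ equals $1$ at $x_0$ (as $\beta^{(2)}_0\geq x_0$). Thus $\norm{h_1-h_2}\geq 1$, which gives the lemma. The only real conceptual point is the asymmetry of the game: Alice cannot force Bob's ordinals to stay below any prescribed bound, but she \emph{can} force them arbitrarily high, so the natural plan is to use one play to pin down Bob's reach and the other to overshoot that bound into the tail where $f$ has already hit zero, thereby creating the full gap of $1$.
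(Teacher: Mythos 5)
Your proof is correct and follows essentially the same route as the paper: play $\G(U,f,n)$ twice against Bob's winning strategy, and have Alice open the second run at a point past which the first run's centre $f\vee\charf{\vbeta^{(1)}}$ already vanishes, so the two centres (both in $U$) differ by $1$ there. Your only variations are cosmetic — you apply eventual constancy to $f$ and bound Bob's ordinals by $\gamma$ separately, where the paper picks $\alpha$ with $(f\vee\charf{\vbeta})(\alpha)=0$ directly, and you make the trivial upper bound $\mathrm{diam}(U)\leq 1$ explicit.
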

\begin{proof}
Play the game $\G(U,f,n)$ twice, both times with Bob using
his winning strategy.
In the first run Alice plays any legal moves.
The run produces $f\vee\charf{\vbeta}\iin U$,
where $\vbeta$ is the sequence of Bob's moves.
Since $f\vee\charf{\vbeta}\iin\cspace$
and $f\vee\charf{\vbeta}(\omega_1)=0$,
there is $\alpha<\omega_1$ such that
$f\vee\charf{\vbeta}(\alpha)=0$.
Play the game $\G(U,f,n)$ again;
this time Alice's first move is $\alpha$ and she plays any legal moves
after that.
Let $\vbetap$ be the sequence of Bob's moves in the second run.
Then $f\vee\charf{\vbetap}\iin U$ and $f\vee\charf{\vbetap}(\alpha)=1$.
\end{proof}

By the next lemma, every $\W(f,n; k)$ generates
a $\sigma$-filter of subsets of $\sspace$.

\begin{lemma}
    \label{lemma:intersect}
Let $f\iin\sspace$, $n\iin\posint$, $1\leq k\leq n$,
and $U_j\iin \W(f,n; k)$ for $j\iin\omega$.
Then
\[
\bigcap_{j\in\omega} U_j \in \W(f,2n; 2k).
\]
\end{lemma}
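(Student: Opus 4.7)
The plan is to construct an explicit winning strategy for Bob in $\G(\bigcap_j U_j, f, 2n; 2k)$ from the given $\sigma_j$. Since Bob wins iff $\ball[f\vee\charf{\vbeta}, 1/(2n)] \subseteq \bigcap_j U_j$, by the triangle inequality it is enough to produce, for each $j$, a legal shadow-game play whose $\sigma_j$-response $\vbetaj$ satisfies $\|f\vee\charf{\vbeta}-f\vee\charf{\vbetaj}\|\le 1/(2n)$: then $\ball[f\vee\charf{\vbeta},1/(2n)]\subseteq\ball[f\vee\charf{\vbetaj},1/n]\subseteq U_j$, and intersecting over $j$ gives the required containment.

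I would group Bob's $2k$ real moves into $k$ consecutive pairs. At the first position of each pair Bob replays Alice's current move verbatim, and at the second position Bob plays the maximum of Alice's latest move and the supremum over all $j\in\omega$ of $\sigma_j$ applied to the sequence of first-of-pair Alice moves played so far. The inner supremum is a countable sup of countable ordinals, so it lies in $\omega_1$, and the monotonicity required by the game rules is immediate. For each $j$, the first-of-pair Alice moves form a legal sequence of shadow-Alice moves against $\sigma_j$: the next first-of-pair Alice move dominates the previous second-of-pair Bob move, which by construction dominates the previous supremum, which in turn dominates $b_{i-1}^j$. By the winning property of $\sigma_j$, the resulting $\vbetaj$ then satisfies $\ball[f\vee\charf{\vbetaj},1/n]\subseteq U_j$.

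The heart of the argument will be the pointwise bound $|\charf{\vbeta}(\gamma)-\charf{\vbetaj}(\gamma)|\le 1/(2n)$ for every $\gamma$. Both are step functions, and the weights $(2n-m)/(2n)$ of the big game coincide with the shadow-game weights $(n-p)/n$ exactly at the first-of-pair positions, while the second-of-pair positions carry weights $1/(2n)$ smaller. Writing $I_\alpha^{\mathrm{even}}$, $I_\alpha^{\mathrm{odd}}$, $I_b$, and $i_j^*$ for the least index at which, respectively, the first-of-pair Alice move, the second-of-pair Alice move, the supremum $\sup_{j'}b_\cdot^{j'}$, and $b_\cdot^j$ first reach $\gamma$, the game rules together with the supremum construction force $I_\alpha^{\mathrm{even}}-1\le I_\alpha^{\mathrm{odd}}\le I_\alpha^{\mathrm{even}}$ and $I_\alpha^{\mathrm{even}}-1\le I_b\le i_j^*\le I_\alpha^{\mathrm{even}}$, so all four indices lie within a two-element window. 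A short case analysis on their values then yields the desired bound, and since $|(f\vee g)-(f\vee h)|\le|g-h|$ pointwise the bound transfers to $f\vee\charf{\cdot}$.

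The main obstacle is accommodating the countably many $\sigma_j$ within a single Bob strategy having only $2k$ real moves; the countable supremum inside each second-of-pair Bob move handles this by exploiting the countable cofinality of $\omega_1$. The doubling $n\to 2n$, $k\to 2k$ supplies exactly the slack $1/n-1/(2n)=1/(2n)$ that the triangle inequality absorbs when converting the shadow-game radius $1/n$ into the big-game radius $1/(2n)$.
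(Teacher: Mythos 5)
Your construction is correct, and it is essentially the paper's argument: run the countably many shadow games $\G(U_j,f,n;k)$ inside $\G(\bigcap_j U_j,f,2n;2k)$, pair up Bob's moves so that one half of each pair copies Alice and the other half is the supremum of the shadow responses (legal because $\omega_1$ is regular), and conclude via $\norm{(f\vee\charf{\vbeta})-(f\vee\charf{\vbetaj})}\le 1/2n$ and the triangle inequality. The one substantive divergence is the order inside each pair, and your choice is the one that actually delivers the $1/2n$ estimate. The paper has Bob play the supremum at the even slot $2p$, whose weight $(2n-2p)/2n$ equals the shadow weight $(n-p)/n$, and copy Alice at the odd slot; then for a $j$ not attaining the supremum and $\gamma$ with $\beta_{j,2p}<\gamma\le\beta_{2p}$ one gets $\charf{\vbeta}(\gamma)=(2n-2p)/2n$ while $\charf{\vbetaj}(\gamma)=(2n-2p-2)/2n$, a gap of $1/n$, so the displayed $1/2n$ bound does not hold for that alignment. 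Your arrangement (copy first, supremum second) puts the supremum at a slot whose weight is already $1/2n$ below the shadow weight, and the next shadow move catches up to the supremum at the cost of only another $1/2n$; your index bookkeeping ($I^{\mathrm{even}}_\alpha$, $I^{\mathrm{odd}}_\alpha$, $I_b$, $i^*_j$ confined to a two-step window) is exactly the right skeleton for the omitted case analysis. Feeding $\sigma_j$ its own earlier responses, rather than positions containing the suprema, is also the cleaner way to invoke the winning property of $\sigma_j$. Two small repairs: the suprema are countable because $\omega_1$ has \emph{uncountable} cofinality (regularity), not ``countable cofinality'' as you wrote; and you should state explicitly that the first-of-pair slots of the forced-zero block of $\G(\bigcap_j U_j,f,2n;2k)$ line up with the $n-k$ forced-zero moves of each $\G(U_j,f,n;k)$, which they do.
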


\begin{proof}
Write $U:=\bigcap_{j\in\omega} U_j$.
For every $j\iin\omega$ Bob has a winning strategy $S_j$
for the game $\G(U_j,f,n; k)$.
Bob's winning strategy for $\G(U,f,2n; 2k)$ is the following:
In the game position
$\alpha_0 \leq \beta_0 \leq \dotsc \leq \beta_{i-1}\leq \alpha_{i}$
where $i\leq 2n-1$ is even,
Bob's next move is $\beta_i := \sup_{j\in\omega} \beta_{j,i}$,
where $\beta_{j,i}$ is chosen by Bob's strategy $S_j$
in the position
\[
\alpha_0 \leq \beta_0 \leq \alpha_2 \leq \beta_2 \leq\dotsc
\leq \alpha_{i-2}\leq \beta_{i-2}\leq \alpha_{i}
\]
of the game $\G(U_j,f,n; k)$.
In the game position
$\alpha_0 \leq \beta_0 \leq \dotsc \leq \beta_{i-1}\leq \alpha_{i}$
where $i\leq 2n-1$ is odd, Bob's next move is $\beta_i:=\alpha_i$.

Let $\vbeta =\langle \beta_0,\beta_1,\dotsc,\beta_{2n\!-1}\rangle$
be the resulting sequence of Bob's moves.
For every $j\iin\omega$ let $\vbetaj$ be the sequence
$\langle \beta_{j,0},\beta_{j,2},\dotsc,\beta_{j,2n\!-2}\rangle$
of the choices made by strategy $S_j$
in the positions
$\alpha_0 \leq \beta_0 \leq \alpha_2 \leq \beta_2 \leq\dotsc
\leq \alpha_{i-2}\leq \beta_{i-2}\leq \alpha_{i}$ with $i$ even.
Then $\ball[f\vee\charf{\vbetaj},1/n]\subseteq U_j$ for every $j$
because $S_j$ is a winning strategy for Bob in $\G(f,U_j,n; k)$.
But $\norm{(f\vee\charf{\vbeta}) - (f\vee\charf{\vbetaj})} \leq 1/2n$
for every $j$ and therefore
$\ball[f\vee\charf{\vbeta},1/2n]\subseteq U$.
\end{proof}

\begin{corollary}
    \label{cor:double}
Let $f\iin\sspace$ and $n\iin\posint$.
Then $\W(f,n)\subseteq \W(f,2n)$.
\qed
\end{corollary}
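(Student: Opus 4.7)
The plan is to observe that this corollary is essentially a one-line consequence of Lemma~\ref{lemma:intersect}, obtained by specializing to a constant sequence of sets.

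More precisely, suppose $U\iin \W(f,n)$. Recall from the definitions that $\G(U,f,n)=\G(U,f,n;n)$, so $\W(f,n)=\W(f,n;n)$, and likewise $\W(f,2n)=\W(f,2n;2n)$. Now I would apply Lemma~\ref{lemma:intersect} with $k:=n$ and with the constant sequence $U_j:=U$ for every $j\iin\omega$. Since each $U_j=U$ lies in $\W(f,n;n)$, the lemma gives
\[
U \;=\; \bigcap_{j\in\omega} U_j \;\in\; \W(f,2n;2n) \;=\; \W(f,2n),
\]
which is the desired conclusion.

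There is essentially no obstacle to overcome: once one notices that $\W(f,n)$ and $\W(f,2n)$ are the $k=n$ and $k=2n$ instances of the parametrized classes, the ``doubling'' step is already packaged inside Lemma~\ref{lemma:intersect}. The only thing worth double-checking is the trivial fact that constants are allowed as the $U_j$ in that lemma, i.e.\ that its statement does not require the $U_j$ to be distinct — which it clearly does not.
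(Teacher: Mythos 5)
Your proposal is correct and matches the paper's intent exactly: the corollary is stated with no written proof precisely because it is the $k=n$, constant-sequence instance of Lemma~\ref{lemma:intersect}, using $\W(f,n)=\W(f,n;n)$ and $\W(f,2n)=\W(f,2n;2n)$. Nothing further is needed.
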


\begin{corollary}
    \label{cor:finite}
Let $\Pcov$ be a point-finite cover of
$\sspace$, $f\iin\sspace$, $n\iin\posint$
and $1\leq k\leq n$.
Then the set $\Pcov \cap \W(f,n; k)$ is finite.
\end{corollary}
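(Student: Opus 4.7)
The plan is a direct contradiction argument combining Lemma~\ref{lemma:intersect} with the point-finiteness of $\Pcov$. The key observation is that membership in $\W(f,2n;2k)$ already forces the set to be \emph{nonempty}: any play of the game $\G(U,f,2n;2k)$ in which Bob follows his winning strategy produces a concrete sequence $\vbeta$ of Bob's moves with $f\vee\charf{\vbeta} \in \ball[f\vee\charf{\vbeta},1/2n] \subseteq U$, so $U$ contains at least this point.

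Suppose toward a contradiction that $\Pcov \cap \W(f,n;k)$ were infinite. Then I can pick a countably infinite subfamily $\{U_j\}_{j\in\omega} \subseteq \Pcov \cap \W(f,n;k)$. Applying Lemma~\ref{lemma:intersect} to this family (noting $2k\leq 2n$ since $k\leq n$), I obtain
\[
U := \bigcap_{j\in\omega} U_j \in \W(f,2n;2k).
\]
By the observation of the previous paragraph, pick any run of $\G(U,f,2n;2k)$ in which Bob uses his winning strategy; let $\vbeta$ be the resulting sequence of his moves. Then the point $h := f\vee\charf{\vbeta}$ lies in $U$, and hence in every $U_j$.

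This contradicts the hypothesis that $\Pcov$ is point-finite: the single point $h$ would belong to infinitely many members $U_j$ of $\Pcov$. Therefore $\Pcov \cap \W(f,n;k)$ must be finite. I do not anticipate any real obstacle beyond verifying that Lemma~\ref{lemma:intersect} applies with the given indices; the work has been done in the lemma, and point-finiteness closes the argument in one line.
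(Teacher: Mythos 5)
Your proposal is correct and is essentially the paper's own argument: pass to a countably infinite subfamily, apply Lemma~\ref{lemma:intersect} to place the intersection in $\W(f,2n;2k)$, and contradict point-finiteness via nonemptiness of that intersection. Your explicit remark that a winning run produces the point $f\vee\charf{\vbeta}$ in the intersection is just the (correct) justification of the step the paper leaves implicit.
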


\begin{proof}
Assume to the contrary that there is a countable infinite set
$\N\subseteq\Pcov\cap \W(f,n; k)$.
Then $\bigcap\N \in \W(f,2n; 2k)$ by Lemma~\ref{lemma:intersect}.
Hence $\bigcap\N\neq\emptyset$,
which contradicts $\Pcov$ being point-finite.
\end{proof}

Our next goal is to prove that the finite set $\Pcov \cap \W(f,n; k)$
is not empty.
This is the crucial step in the proof of the main theorem.

\begin{lemma}
    \label{lemma:mainstep}
Let $\Pcov$ be a point-finite cover of $\sspace$, and let $n\iin\posint$
be such that $\Pcov$ is refined by the cover
$\{ \ball[g,2/n] \mid g\iin\sspace \}$.
Then $\Pcov \cap \W(f,n; k) \neq \emptyset$ for every $f\iin\sspace$
and every $1\leq k\leq n$.
\end{lemma}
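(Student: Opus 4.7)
I plan to prove this by induction on $k$, using the $\alpha$-modified games $\G(U,f,n;k,\alpha)$ as a technical bridge. The key reduction is: setting $f':=f\vee\tfrac{k}{n}\charf{\alpha}$ (which one checks lies in $\sspace$), one has $\W(f',n;k-1)\subseteq\W(f,n;k,\alpha)$. If Bob wins $\G(U,f',n;k-1)$ by a strategy $S'$, then in $\G(U,f,n;k,\alpha)$ he responds to Alice's forced opening with $\beta=\alpha$ and thereafter plays $S'$; Alice's subsequent moves are $\geq\alpha\geq 0$, which is a legal play against $S'$, and a direct calculation shows that the resulting sequences $B,B'$ of Bob's moves in the two games satisfy $f\vee\charf{B}=f'\vee\charf{B'}$, so the winning condition $\ball[\cdot,1/n]\subseteq U$ transfers.

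First I would extend Lemma~\ref{lemma:intersect} and Corollary~\ref{cor:finite} to the $\alpha$-modified games: if $U_j\in\W(f,n;k,\alpha)$ for $j\in\omega$, then $\bigcap_j U_j\in\W(f,2n;2k,\alpha)$, and consequently $\Pcov\cap\W(f,n;k,\alpha)$ is finite for every $\alpha$. The interleaving argument from Lemma~\ref{lemma:intersect} transfers verbatim, since Alice's forced move $\alpha$ at step $n-k$ of each small game corresponds to her forced move at step $2n-2k$ of the big game under the index correspondence $(\alpha_l'',\beta_l'')=(\alpha_{2l},\beta_{2l})$. Next, I would prove by induction on $k$ that $\Pcov\cap\W(f,n;k,\alpha)\neq\emptyset$ for every $f$ and every $\alpha$. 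For the base case $k=1$, the functions $f\vee\charf{\langle 0,\ldots,0,\beta\rangle}$ differ pairwise by at most $1/n$ in norm as $\beta$ varies, so a single $U\in\Pcov$ supplied by the $2/n$-refinement assumption contains $\ball[f\vee\charf{\langle 0,\ldots,0,\beta\rangle},1/n]$ for every $\beta$, and Bob wins $\G(U,f,n;1,\alpha)$ by playing $\beta=\alpha$. For $k>1$, apply the inductive hypothesis (the statement of the lemma for $k-1$) to $f'=f\vee\tfrac{k}{n}\charf{\alpha}$ and invoke the reduction above.

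To pass from the $\alpha$-version to the unrestricted one, observe that the winning condition $\ball[f\vee\charf{B},1/n]\subseteq U$ depends only on $f$ and Bob's moves $B$, not on Alice's play; therefore a winning strategy for $\G(U,f,n;k,\alpha)$ also wins $\G(U,f,n;k,\alpha')$ whenever $\alpha'\leq\alpha$, giving $\W(f,n;k,\alpha)\subseteq\W(f,n;k,\alpha')$. Combining winning strategies across openings yields $\W(f,n;k)=\bigcap_{\alpha<\omega_1}\W(f,n;k,\alpha)$. Thus $\Pcov\cap\W(f,n;k)$ is the intersection of a $\subseteq$-decreasing family of finite nonempty subsets of $\Pcov$; the cardinalities form a nonincreasing sequence in $\omega$ and so stabilize at some $\alpha^*<\omega_1$, and then $\Pcov\cap\W(f,n;k)=\Pcov\cap\W(f,n;k,\alpha^*)\neq\emptyset$.

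The main obstacle I expect is the careful verification of the reduction $f\vee\charf{B}=f'\vee\charf{B'}$ (tracking how the $\charf{\beta_j}$-contributions interact with the added term $\tfrac{k}{n}\charf{\alpha}$) and confirming that the unconstrained $(k-1)$-strategy legitimately accommodates the implicit Alice-moves-$\geq\alpha$ constraint in the continuation of the bigger game. Once this is in place, the finiteness transfer, the monotonicity in $\alpha$, and the stabilization at the end are all routine.
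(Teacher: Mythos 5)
Your proposal is correct and follows essentially the same route as the paper: your reduction via $f'=f\vee\tfrac{k}{n}\charf{\alpha}$ is precisely the paper's Claim with $f\modik{k+1}\alpha$, and your monotonicity-in-$\alpha$ plus stabilization of the finite sets $\Pcov\cap\W(f,n;k,\alpha)$ is exactly how the paper passes from the constrained to the unconstrained game. The only (welcome) difference is that you spell out the extension of Lemma~\ref{lemma:intersect} and Corollary~\ref{cor:finite} to the $\alpha$-constrained games, which the paper uses implicitly when citing Corollary~\ref{cor:finite}.
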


\begin{proof}
In this proof letters $\alpha$ and $\beta$, with or without subscripts,
stand for countable ordinals.
For $f\iin\sspace$, $1\leq k\leq n$ and $\alpha$,
define the function $f\modik{k}\alpha \iin \sspace$ by
$f\modik{k}\alpha := f \vee (k \,\charf{\alpha} / n)$.

The proof proceeds by induction on $k$, starting with $k=1$.
In the game $\G(f,n; 1)$ all except the last move by each player are 0.
Hence
\[
\W(f,n; 1) = \{U\subseteq\sspace
                \mid \forall \alpha \;\; \exists \beta \geq \alpha \; : \;
                \ball[f\modik{1} \beta,1/n] \subseteq U \}.
\]
Since $\Pcov$ is refined by $\{ \ball[g,2/n] \mid g\iin\sspace \}$,
there is $U_0\iin\Pcov$ for which $\ball[f,2/n]\subseteq U_0$.
Since $\ball[f\modik{1} \beta,1/n] \subseteq \ball[f,2/n]$ for every $\beta$,
it follows that Bob wins $\G(U_0,f,n;1)$ with \emph{any} strategy.
Hence $U_0\iin \Pcov \cap \W(f,n;1)$.
That concludes the proof for $k=1$.

For the induction step, take $k\leq n-1$ and
assume the conclusion of the lemma holds for every $f\iin\sspace$.
Take any $f\iin\sspace$.

\emph{Claim:}
For every $\alpha$ there exists $U\iin\Pcov$ for which Bob wins
$\G(U,f,n; k+1 , \alpha)$.

By the induction assumption with $f\modik{k+1}\alpha$ in place of $f$ we have
$\Pcov \cap \W(f\modik{k+1} \alpha,n; k) \neq \emptyset$,
hence there are $U\iin\Pcov$ and a winning strategy $S$ for Bob in the game
$\G(U,f\modik{k+1} \alpha,n; k)$.
Bob's winning strategy for $\G(U,f,n; k+1,\alpha)$ is to choose
$\beta_{n-k-1}=\alpha$ and then follow strategy $S$ in the subsequent moves.
That proves the claim.

Now observe that for $\alpha \leq \alpha^\prime$ we have
$\W(f,n; k+1, \alpha) \supseteq \W(f,n; k+1 , \alpha^\prime)$.
Since the sets $\Pcov \cap \W(f,n; k+1,\alpha)$ are finite by
Corollary~\ref{cor:finite},
there is $\alpha_0$ such that
\[
\Pcov \cap \W(f,n; k+1,\alpha_0)
\subseteq \Pcov \cap \W(f,n; k+1, \alpha)
\]
for every $\alpha$.
We have $\Pcov \cap \W(f,n; k+1, \alpha_0)\neq\emptyset$ by the claim,
and Bob wins $\G(U,f,n; k+1)$ for every
$U\iin \Pcov \cap \W(f,n; k+1, \alpha_0)$.
That completes the induction step.
\end{proof}

\begin{lemma}
    \label{lemma:filter}
Let $\U$ be a uniform structure on the set $\sspace$ such that
the metric uniformity of $\sspace$ is finer than $\U$ and
$\U$ has a uniformity basis consisting of point-finite covers.
Let $f\iin\sspace$.
Then there is a filter $\F$ of subsets of $\sspace$ such that
\begin{itemize}
\item
$\F$ is $\U$-Cauchy; and
\item
for every $U\iin\F$ there is $n\iin\omega$ for which Bob wins $\G(U,f,2^n)$.
\end{itemize}
\end{lemma}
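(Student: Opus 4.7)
My plan is to take $\F := \bigcup_{n\iin\omega} \W(f,2^n)$ and verify that this is the desired filter. The second bulleted property then holds by construction; what remains is to check that $\F$ is a proper filter and that it is $\U$-Cauchy.

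For the filter properties, I would first note that each $\W(f,m)$ is upward closed: if $U\iin\W(f,m)$ and $V\supseteq U$, then any winning strategy for Bob in $\G(U,f,m)$ also wins $\G(V,f,m)$, because $\ball[f\vee\charf{\vbeta},1/m]\subseteq U\subseteq V$. Hence $\F$ is upward closed. Lemma~\ref{lem:largediam} shows that every $U\iin\F$ has $\norm{\cdot}$-diameter $1$, so $\emptyset\notin\F$ and $\F$ is proper. For closure under pairwise intersections, given $U_1,U_2\iin\F$ with $U_i\iin\W(f,2^{n_i})$, set $N:=\max(n_1,n_2)$; iterating Corollary~\ref{cor:double} puts $U_1,U_2\iin\W(f,2^N)$, and Lemma~\ref{lemma:intersect} applied to the countable family $U_1,U_2,U_2,U_2,\dots$ with $k=n=2^N$ yields $U_1\cap U_2\iin\W(f,2^{N+1})\subseteq\F$.

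The main step is showing that $\F$ is $\U$-Cauchy. By the observation made just before the definition of $\G$, $\U$ has a basis of point-finite covers each of which is refined by $\{\ball[g,\varepsilon]\mid g\iin\sspace\}$ for some $\varepsilon>0$. Given such a basis cover $\Pcov$, pick $m\iin\omega$ with $2/2^m\leq\varepsilon$, so that $\Pcov$ is also refined by $\{\ball[g,2/2^m]\mid g\iin\sspace\}$. Lemma~\ref{lemma:mainstep} with $n=2^m$ and $k=n$ then produces $U\iin\Pcov\cap\W(f,2^m)\subseteq\F$, and upward closure of $\F$ propagates this to every uniform cover of $\U$. I don't anticipate a real obstacle: the powers of $2$ are forced by the doubling in Corollary~\ref{cor:double} and Lemma~\ref{lemma:intersect}, so $\W(f,1)\subseteq\W(f,2)\subseteq\W(f,4)\subseteq\dotsb$ is precisely the natural increasing chain whose union is stable under the filter operations.
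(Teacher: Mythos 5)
Your proposal is correct, and all the steps check out: upward closure of each $\W(f,m)$, properness, closure of $\bigcup_n\W(f,2^n)$ under pairwise intersection via Corollary~\ref{cor:double} and Lemma~\ref{lemma:intersect} with $k=n$, and Cauchyness via Lemma~\ref{lemma:mainstep} applied to a point-finite basis cover refined by $\{\ball[g,2/2^m]\mid g\iin\sspace\}$, followed by upward closure. The route differs from the paper's in its decomposition: the paper does not take the union of the $\W(f,2^n)$ as the filter, but instead forms the family $\F_0$ of sets $\bigcap\bigl(\Pcov\cap\W(f,2^n)\bigr)$ for point-finite $\U$-uniform covers $\Pcov$ and $n\geq r(\Pcov)$, proves the finite intersection property (using Corollary~\ref{cor:finite} to know these families are finite, then Corollary~\ref{cor:double} and Lemma~\ref{lemma:intersect}), and lets $\F$ be the generated filter; Cauchyness is then automatic because each member of $\F_0$ sits inside a single member of the corresponding $\Pcov$, which tacitly relies on Lemma~\ref{lemma:mainstep} for $\Pcov\cap\W(f,2^n)\neq\emptyset$. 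Your version buys a more transparent argument: it makes the role of Lemma~\ref{lemma:mainstep} explicit (the paper's ``clearly $\U$-Cauchy'' hides it) and avoids Corollary~\ref{cor:finite} altogether within this proof, at the modest cost of verifying the filter axioms by hand, whereas the paper gets them for free from the generated-filter formulation. One small remark: the observation you invoke from before the definition of the games is stated there under the assumptions of the main theorem, but its justification uses only the two hypotheses of Lemma~\ref{lemma:filter} (the metric uniformity being finer than $\U$ yields the $\varepsilon$-ball refinement of any point-finite $\U$-uniform cover), so your appeal to it is legitimate; also, properness needs nothing as strong as Lemma~\ref{lem:largediam}, since any $U\iin\W(f,m)$ contains a nonempty ball $\ball[f\vee\charf{\vbeta},1/m]$, though your argument is of course valid.
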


\begin{proof}
For every $\U$-uniform cover $\Pcov$
there is $r(\Pcov)\iin\omega$ such that $\Pcov$ is refined by
$\{ \ball[g,2^{1\!-n}] \mid g\iin\sspace \}$ for every $n\geq r(\Pcov)$.
Define
\[
\F_0 := \left\{\; \bigcap \left( \Pcov \cap \W(f,2^n) \right)
\mid \Pcov \text{ is a point-finite }
\U\text{-uniform cover and } n\geq r(\Pcov) \right\} .
\]
If $\F_0$ has the finite intersection property then clearly the filter $\F$
generated by $\F_0$ is $\U$-Cauchy.

Take any finite subset $\{A_0, A_1, \dotsc, A_j\}$ of $\F_0$
and let $A:=\bigcap_{i=0}^j A_i$.
There are point-finite $\U$-uniform covers $\Pcov_i$ and $n_i$
such that $n_i \geq r(\Pcov_i)$
and $A_i = \bigcap \left( \Pcov_i \cap \W(f,2^{n_i}) \right)$
for $0\leq i \leq j$.
Write $n:= \max_i n_i$.
The sets $\Pcov_i \cap \W(f,2^{n_i})$ are finite
by Corollary~\ref{cor:finite},
hence
\[
A = \bigcap_{0\leq i\leq j} \;
     \bigcap \left( \Pcov_i \cap \W(f,2^{n_i}) \right)
\in \W(f,2^{n+1})
\]
by Lemma~\ref{lemma:intersect} and Corollary~\ref{cor:double}.
Thus $\F_0$ has the finite intersection property and the filter generated
by $\F_0$ has the second property in the lemma.
\end{proof}

\begin{proof}[Proof of the main theorem]
Take any $f\iin\sspace$ (for example $f=\charf{0}$).
Let $\F$ be a filter with the two properties in Lemma~\ref{lemma:filter}.
By Lemma~\ref{lem:largediam}, $\F$ does not converge in the topology of
$\sspace$.
\end{proof}


\section{Corollaries}
    \label{sec:corollaries}

Following Pelant~\cite{Pelant1976pcu},
I have written the main theorem in a form that not only applies
to the space $\precom_1\sspace$,
but is also useful for questions about point-finite covers.

\subsection{Completeness of the separable modification}
    \label{subs:Compl}

All that is now needed to prove that the space $\precom_1\cspace$
is not complete
is the following result of Vidossich~\cite{Vidossich1970usc}.

\begin{lemma}
    \label{lemma:vidossich}
Let $X$ be any uniform space.
The uniformity $\precom_1 X$ has a basis consisting
of countable point-finite covers.
\qed
\end{lemma}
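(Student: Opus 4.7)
The plan is to leverage the projective definition of $\precom_1 X$ to reduce the question to a statement about separable metric spaces, and then invoke paracompactness-type tools in that setting.

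First, I would recall that the uniformity of $\precom_1 X$ is projectively generated by the uniformly continuous maps $\varphi \colon X \to Y_\varphi$ into separable metric spaces $Y_\varphi$. Hence a basis of uniform covers of $\precom_1 X$ consists of finite common refinements of pullbacks $\varphi^{-1}(\mathcal{V})$, where $\mathcal{V}$ is a uniform cover of some such $Y_\varphi$. Both countability and point-finiteness are preserved under pullbacks (a point $x$ lies in $\varphi^{-1}(V)$ iff $\varphi(x)\iin V$) and under finite common refinements, so it suffices to establish, in every separable metric space $Y$, that each uniform cover has a countable point-finite uniform refinement.

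Second, fix such a $Y$ and a uniform cover $\mathcal{V}$ with Lebesgue number $3\varepsilon>0$. The cover $\{B(y_n,\varepsilon)\}_{n\in\omega}$, indexed by a countable dense set $\{y_n\}$, is already a countable uniform refinement of $\mathcal{V}$, so countability comes for free. For point-finiteness I would combine two standard facts about separable metric spaces: paracompactness, which produces locally finite (hence point-finite) open refinements, and the Lindel\"of property, which forces any locally finite family to be countable. The delicate part is arranging the point-finite refinement to remain uniform; one clean way is to take the supports of a countable locally finite uniformly continuous partition of unity subordinate to the ball cover, and to verify that the associated open sets $\{x : \psi_n(x) > 1/2\}$ retain a positive Lebesgue number coming from the original $\varepsilon$.

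The main obstacle I expect is the simultaneous preservation of point-finiteness and uniformity. Paracompactness alone yields point-finite refinements but not uniform ones, and the existence of uniform refinements says nothing about point-finiteness. The substance of Vidossich's argument must therefore be a construction that threads both needles at once in the metric setting, whereas the reduction from $\precom_1 X$ to its separable metric factors is essentially formal.
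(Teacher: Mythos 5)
Your reduction step is fine and is essentially how one would use the projective definition: pullbacks along uniformly continuous maps into separable metric spaces, together with finite meets, preserve both countability and point-finiteness, so the lemma does reduce to the claim that every uniform cover of a separable metric space (equivalently, every countable uniform cover) has a countable point-finite uniform refinement. But note that the paper itself offers no proof of the lemma at all --- it is quoted as a theorem of Vidossich --- and that reduced claim is precisely the substance of Vidossich's result (closely related to Morita's theorem that every countable normal cover has a countable star-finite normal refinement). Your proposal leaves exactly that substance unproved, so what you have is the formal outer layer plus a sketch of the hard core that does not work as stated.

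Concretely, two things in the sketch fail. First, the existence of a \emph{countable locally finite uniformly continuous} partition of unity subordinate to the $\varepsilon$-ball cover is not a standard fact you can just invoke: paracompactness gives locally finite partitions of unity with no uniform control, while the uniform structure gives uniformly equicontinuous ones with no local finiteness, and a partition of unity that is simultaneously locally finite and uniformly continuous already has point-finite cozero supports forming (essentially) the refinement you are trying to build --- so this step is circular. Second, even granting such $\{\psi_n\}$, the sets $\{x \mid \psi_n(x)>1/2\}$ need not cover $Y$: at a point where, say, three of the $\psi_n$ each equal $1/3$, no such set contains it; and the asserted positive Lebesgue number is not argued. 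A correct argument must replace this by an actual refinement construction for countable uniform covers (e.g.\ the Morita-type manipulation of a subordinated uniformly equicontinuous family, producing a star-finite uniform refinement), which is the content of the cited paper and is missing from your proposal.
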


Combining the main theorem in section~\ref{sec:main}
with Lemma~\ref{lemma:vidossich}, we obtain:

\begin{theorem}
    \label{th:notcompl}
The uniform spaces $\precom_1\sspace$ and  $\precom_1\cspace$
are not complete.
\qed
\end{theorem}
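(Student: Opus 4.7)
The plan is to deduce Theorem~\ref{th:notcompl} from the Main Theorem by combining it with Lemma~\ref{lemma:vidossich}, handling $\precom_1\sspace$ directly and $\precom_1\cspace$ via a closed subspace argument.

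First, for $\precom_1\sspace$, I would just check the three hypotheses of the Main Theorem with $\U:=\precom_1\sspace$. By the very definition of the separable modification, $\precom_1\sspace$ is compatible with the norm topology on $\sspace$ and its uniformity is coarser than the norm uniformity (being projectively generated by uniformly continuous maps into separable metric spaces, each of which is also norm-uniformly continuous). Lemma~\ref{lemma:vidossich} supplies a basis of (countable) point-finite uniform covers. The Main Theorem then yields that $\precom_1\sspace$ is not complete.

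For $\precom_1\cspace$, I would argue by contradiction, realizing $\sspace$ as a closed subspace of $\precom_1\cspace$. The set $\sspace$ is closed in $\cspace$ because each of the defining conditions ``$f$ monotone non-increasing'', ``$f(0)=1$'' and ``$f(\omega_1)=0$'' is preserved under uniform convergence. Since $\precom_1\cspace$ has the same topology as $\cspace$, $\sspace$ is closed in $\precom_1\cspace$ as well, so if $\precom_1\cspace$ were complete, then the induced subspace uniformity $\U'$ on $\sspace$ would be complete too. I would then verify that $\U'$ satisfies the hypotheses of the Main Theorem: the induced topology is the norm topology on $\sspace$; $\U'$ is coarser than the norm uniformity on $\sspace$ (since taking subspace uniformities preserves the coarser/finer relation, and $\precom_1\cspace$ is coarser than the norm uniformity on $\cspace$); and a basis of point-finite covers of $\U'$ is obtained by tracing a basis of point-finite covers of $\precom_1\cspace$ (supplied by Lemma~\ref{lemma:vidossich}) onto $\sspace$, restriction being an operation that preserves point-finiteness. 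The Main Theorem then says $\U'$ is not complete, a contradiction.

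The only points that merit care are the bookkeeping facts used in the subspace step: that point-finiteness is preserved under restriction, and that a closed subspace of a complete uniform space is itself complete in the induced uniformity. Both are standard, so I expect no real obstacle; all the substantive work has already been absorbed into the Main Theorem and Lemma~\ref{lemma:vidossich}.
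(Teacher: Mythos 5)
Your proposal is correct and follows essentially the route the paper intends: apply the Main Theorem to $\precom_1\sspace$ directly via Lemma~\ref{lemma:vidossich}, and to the trace of $\precom_1\cspace$ on the closed subset $\sspace$ (this is exactly why the Main Theorem is stated for an arbitrary uniformity $\U$ on $\sspace$ satisfying the three conditions, rather than only for $\precom_1\sspace$). The routine facts you invoke --- traces of point-finite covers are point-finite, traces preserve the coarser/finer relation, and a closed subspace of a complete uniform space is complete --- are precisely the bookkeeping the paper leaves implicit with its \qed.
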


Pelant~\cite{Pelant1975rnp} constructed a complete metric space $X$
for which $\precom_1 X$ is not complete.
Although using a different terminology,
the proof of Theorem~\ref{th:notcompl} here is a modification of
Pelant's construction in~\cite{Pelant1975rnp} and~\cite{Pelant1976pcu}.
The fact that we can take $X=\cspace$ is not surprising
in view of~\cite[Thm 1.1]{Pelant2006csw}.

The theorem has an interesting application in the theory of
\emph{uniform measures}~\cite{Pachl2013usm}.
For any uniform space $X$, let $\Ub(X)$ be the space of bounded
real-valued functions on $X$.
Let $\UMeas(X)$ be the space of uniform measures;
that is, the linear functionals on $\Ub(X)$ that are continuous on every
bounded uniformly equicontinuous subset of $\Ub(X)$
in the $X$-pointwise topology.

In their work on topological centres, Ferri and Neufang~\cite{Ferri2007tca}
defined also the space of linear functionals $\Ub(X)$
that are \emph{sequentially}
continuous on bounded uniformly equicontinuous subsets of $\Ub(X)$.
In section~8.1 of~\cite{Pachl2013usm},
where $\CUMeas(X)$ denotes the space of such functionals,
I prove that $\CUMeas(X)=\UMeas(\precom_1 X)$.
The following corollary is then an immediate consequence of
Theorem~\ref{th:notcompl}, using the relationship between the space
$\UMeas(X)$ and the completion of $X$ (section 6.5 in~\cite{Pachl2013usm}).

\begin{corollary}
    \label{cor:cumeas}
$\CUMeas(\sspace)\neq\UMeas(\sspace)$
and $\CUMeas(\cspace)\neq\UMeas(\cspace)$.
\qed
\end{corollary}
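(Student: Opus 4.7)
The plan is to combine Theorem~\ref{th:notcompl} with two background results from~\cite{Pachl2013usm}. The first is the identity $\CUMeas(X)=\UMeas(\precom_1 X)$ established in section~8.1 there and already quoted in the paragraph preceding the corollary; this lets me convert the assertion $\CUMeas(X)\neq\UMeas(X)$ into the assertion $\UMeas(\precom_1 X)\neq\UMeas(X)$. The second is the description of $\UMeas(Y)$ in terms of the uniform completion $\hat Y$ developed in section~6.5 of the same book: the Dirac assignment $y\mapsto\delta_y$ embeds $\hat Y$ into $\UMeas(Y)$, and the ``Dirac part'' of $\UMeas(Y)$ is in bijection with the points of $\hat Y$. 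In particular, when $Y$ fails to be complete there is a Dirac measure in $\UMeas(Y)$ based at a genuinely new point $\hat y\iin\hat Y\setminus Y$.

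With those tools in hand, the argument is essentially a one-liner. Apply them with $X=\sspace$: because $\sspace$ is a complete metric space, every Dirac functional in $\UMeas(\sspace)$ is $\delta_x$ for some $x\iin\sspace$. By Theorem~\ref{th:notcompl}, $\precom_1\sspace$ is not complete, so its completion contains a point $\hat x$ not already in $\sspace$. The corresponding Dirac functional $\delta_{\hat x}$ then lies in $\UMeas(\precom_1\sspace)=\CUMeas(\sspace)$ but not in $\UMeas(\sspace)$, giving $\CUMeas(\sspace)\neq\UMeas(\sspace)$. Repeating the argument verbatim with $\cspace$ in place of $\sspace$, using that $\cspace$ is a complete Banach space and that $\precom_1\cspace$ is not complete, yields $\CUMeas(\cspace)\neq\UMeas(\cspace)$.

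The only delicate point, and therefore the main (still easy) obstacle, is to place $\CUMeas(X)$ and $\UMeas(X)$ inside a common ambient space so that the two Dirac functionals are genuinely comparable and the new one really is not of the form $\delta_x$ for $x\iin X$. This is precisely what the framework of section~6.5 of~\cite{Pachl2013usm} provides, via the canonical identification of $\UMeas(\cdot)$ with measures on the completion; no new ingredients beyond those cited are needed.
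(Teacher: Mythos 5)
Your argument is correct and is exactly the paper's intended proof: the paper likewise derives the corollary immediately from Theorem~\ref{th:notcompl} together with the identity $\CUMeas(X)=\UMeas(\precom_1 X)$ and the identification of the completion with the Dirac-type elements of $\UMeas(\cdot)$ from section~6.5 of~\cite{Pachl2013usm}. You merely spell out the details (the new point of the completion of $\precom_1 X$ giving a functional in $\CUMeas(X)\setminus\UMeas(X)$, using completeness of $\sspace$ and $\cspace$) that the paper leaves to the cited reference.
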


I don't know any simple proof of Corollary~\ref{cor:cumeas},
or in fact any other construction, without using measurable cardinals,
of a uniform space $X$ such that $\CUMeas(X)\neq\UMeas(X)$.

\subsection{Point-finite refinements of uniform covers}
    \label{subs:PFinite}

Let $X$ be a metric space.
Since $X$ is paracompact, every open cover of $X$ is refined by an open
locally finite cover.
Thus it is natural to ask, as Stone~\cite{Stone1960uss} and then
Isbell~\cite[p.144]{Isbell1964us} did:
Is every \emph{uniform} cover of $X$ refined by a uniformly
locally finite \emph{uniform} cover?
By~\cite[VIII.3]{Isbell1964us}, an equivalent question is:
\textbf{Is every uniform cover of $X$ refined by a point-finite
uniform cover?}

Pelant~\cite{Pelant1975crp}\cite{Pelant1977cpu}
and {\v{S}}{\v{c}}epin~\cite{Scepin1975opi} constructed metric spaces $X$
for which the answer is negative.
In a later paper, Pelant, Holick{\'y} and Kalenda~\cite{Pelant2006csw}
prove that the answer is no for $X=\cspace$;
this immediately follows also from the main theorem
in section~\ref{sec:main}.

Let $X$ be a Banach space with
the metrizable uniformity defined by its norm.
Pelant~\cite{Pelant1994eic} proved that $X$ has a uniformity basis
consisting of point-finite covers if and only if $X$ is uniformly
homeomorphic to a subset of $c_0(\Gamma)$ for some index set $\Gamma$.
Thus we get Theorem~1.1 in~\cite{Pelant2006csw}:
The Banach space $\cspace$ is not uniformly homeomorphic to a subset
of $c_0(\Gamma)$ for any $\Gamma$.
This and related results are discussed
in more detail in~\cite{Pelant2006csw}.

\end{document}